\newcommand{\eat}[1]{}
\theoremstyle{definition} \newtheorem{theorem}{Theorem}[section]
\theoremstyle{definition} \newtheorem{definition}[theorem]{Definition}
\theoremstyle{definition} \newtheorem{lemma}[theorem]{Lemma}
\theoremstyle{definition} \newtheorem{proposition}[theorem]{Proposition}
\theoremstyle{definition} \newtheorem{corollary}[theorem]{Corollary}
\theoremstyle{definition} 
\theoremstyle{definition} \newtheorem{example}[theorem]{Example}
\theoremstyle{definition} 
\theoremstyle{definition} \newtheorem{remark}[theorem]{Remark}
\theoremstyle{definition} 
\title{(m,n)-Semihyperrings and an Algebra of Fuzzy (m,n)-Semihyperrings}			
\author{Syed Eqbal Alam  \footnote{\tt syed.eqbal@ieee.org}, Sultan Aljahdali \footnote{\tt aljahdali@tu.edu.sa} , Nisar Hundewale \footnote{\tt nisar@computer.org}  
    \\
    College of Computers and Information Technology
    \\
Taif University - Taif \\ 
    Kingdom of Saudi Arabia \\
}
\date{}
\begin{document}
\maketitle	

\begin{abstract}
We propose a new class of algebraic structure named as \emph{$(m,n)$-semihyperring} which is a generalization of usual \emph{semihyperring}. We define the basic properties of $(m,n)$-semihyperring like identity elements, weak distributive $(m,n)$-semihyperring, zero sum free, additively idempotent, hyperideals, homomorphism, inclusion homomorphism, congruence relation, quotient $(m,n)$-semihyperring etc. We propose some lemmas and theorems on homomorphism, congruence relation, quotient $(m,n)$-semihyperring, etc and prove these theorems. We further extend it to introduce the relationship between fuzzy sets and $(m,n)$-semihyperrings and propose fuzzy hyperideals and homomorphism theorems on fuzzy $(m,n)$-semihyperrings and the relationship between fuzzy $(m,n)$-semihyperrings and the usual $(m,n)$-semihyperrings.    
\end{abstract}

{\bf Keywords:} $(m,n)$-semihyperring, hyperoperation, hyperideal, homomorphism, congruence relation, fuzzy $(m,n)$-semihyperring, fuzzy hyperideals

\section{Introduction}	 	
A semihyperring is essentially a semiring in which addition is a hyperoperation~\cite{davvaz2009}. Semihyperring is in active research for a long time. Vougiouklis~\cite{vougiouklis1990} generalize the concept of hyperring $(\mathcal{R},\oplus, \odot)$ by dropping the reproduction axiom where $\oplus$ and $\odot$ are associative hyper operations and $\odot$ distributes over $\oplus$ and named it as semihyperring.
Chaopraknoi, Hobuntud and Pianskool ~\cite{chaopraknoi2008} studied semihyperring with zero.  
 Davvaz and Poursalavati~\cite{bdavvaz1999} introduced the matrix representation of polygroups over hyperring and also over semihyperring. Semihyperring and its ideals are studied by Ameri and Hedayati ~\cite{ameriHey2007}.
 
 Zadeh~\cite{lzadeh1965} introduced the notion of a fuzzy set that is used to formulate some of the basic concepts of algebra. It is extended to fuzzy hyperstructures, now a days fuzzy hyperstructure is a fascinating research area. Davvaz introduced the notion of fuzzy subhypergroups in~\cite{bdavvaz1999}, Ameri and Nozari~\cite{ameri2011} introduced fuzzy regular relations and fuzzy strongly regular relations of fuzzy hyperalgebras and also established a connection between fuzzy hyperalgebras and algebras. Fuzzy subhypergroup is also studied by Cristea~\cite{irinaC2009}. Fuzzy hyperideals of semihyperrings are studied by ~\cite{davvaz2009,davvazdudek2009,ameriH2010}.
 
 The generalization of Krasner hyperring is introduced by Mirvakili and Davvaz ~\cite{mirvakili2010} that is named as Krasner $(m,n)$ hyperring. In ~\cite{davvaz2010} Davvaz studied the fuzzy hyperideals of the Krasner  $(m,n)$-hyperring. Generalization of hyperstructures are also studied by ~\cite{davvazVou2006, davvaz2009, bdavvazpcor2009, fotea2009}. 
 
In this paper, we introduce the notion of the generalization of usual semihyperring and called it as $(m,n)$-semihyperring and set fourth some of its properties, we also introduce fuzzy $(m,n)$-semihyperring and its basic properties and the relation between fuzzy $(m,n)$-semihyperring and its associated $(m,n)$-semihyperring. 

The paper is arranged in the following fashion:

Section ~\ref{prelim} describes the notations used and the general conventions followed. Section ~\ref{def_semihyp} deals with the definitions of $(m,n)$-semihyperring, weak distributive $(m,n)$-semihyperring, hyperadditive and multiplicative identity elements, zero, zero sum free, additively idempotent and some examples of $(m,n)$-semihyperrings. 

Section~\ref{prop_semhyp} describes the properties of $(m,n)$-semihyperring. This section deals with the definitions of hyperideals, homomorphism, congruence relation, quotient of $(m,n)$-semihyperring and also the theorems based on these definitions.   

Section \ref{fuuzy_sem} deals with the fuzzy $(m,n)$-semihyperrings, fuzzy hyperideals and homomorphism theorems on $(m,n)$-semihyperrings and fuzzy $(m,n)$-semihyperrings.  


\section{Preliminaries} \label{prelim}

Let \(\mathcal{H}\) be a non-empty set and \(\mathcal{P^{*}}(\mathcal{H})\) be the set of all non-empty subsets of \(\mathcal{H}\). A hyperoperation on \(\mathcal{H}\) is  a map \(\sigma\) : \(\mathcal{H} \times \mathcal{H}\)$\rightarrow$ \(\mathcal{P^{*}}(\mathcal{H})\) and the couple $(\mathcal{H} ,\sigma)$ is called a \emph{hypergroupoid}. If $A$ and $B$ are non-empty subsets of \(\mathcal{H}\), then we denote 
$A \sigma B$ = ${\displaystyle \bigcup_{ a \in A,b \in B} a \sigma b}$,   $x \sigma A$= $\{x\} \sigma A$ and $A \sigma x$= $A \sigma \{x\}$.  

Let $\mathcal{H}$ be a non-empty set, $\mathcal{P^{*}}$ be the set of all non-empty subsets of $\mathcal{H}$ and a mapping $f:\mathcal{H}^m \rightarrow \mathcal{P^{*}}(\mathcal{H})$ is called an \emph{$m$-ary hyperoperation} and $m$ is called the \emph{arity of hyperoperation}~\cite{davvazVou2006}.

A hypergroupoid $(\mathcal{H} ,\sigma)$ is called a \emph{semihypergroup} if for all $x,y,z \in \mathcal{H}$ we have $(x \sigma y) \sigma z= x \sigma (y \sigma z)$ which means that
\[{\displaystyle \bigcup_{ u \in x \sigma y} u \sigma z} = {\displaystyle \bigcup_{ v \in y \sigma z} x \sigma v}.\] 

Let $f$ be an $m$-ary hyperoperation on \(\mathcal{H}\) and $A_1,A_2,\ldots A_m$ subsets of \(\mathcal{H}\). We define 
\[f(A_1,A_2,\ldots, A_m) ={\displaystyle \bigcup_{x_i \in A_i} f(x_1,x_2,\ldots, x_m)}\]
for all $1 \leq i \leq m$.

\begin{definition} 
$(\mathcal{H},\oplus,\otimes)$ is a semihyperring  which satisfies the following axioms:

\begin{itemize}

 \item[(i)]  $(\mathcal{H},\oplus)$ is a semihypergroup
 \item[(ii)] $(\mathcal{H},\otimes)$ is a semigroup and 
 \item[(iii)]$\otimes$ distributes over $\oplus$  

$x \otimes (y\oplus z) =(x \otimes y) \oplus (x \otimes z)$ and 

$(y \oplus z ) \otimes x =(y \otimes x) \oplus (z \otimes x)$
for all $x,y,z \in \mathcal{H}$~\cite{chaopraknoi2008}.

\end{itemize}
\end{definition}

\begin{example} 
Let $(\mathcal{H},+,\times)$ be a semiring, we define
\begin{itemize}
 \item[(i)] $x \oplus y = <x , y>$
\item[(ii)] $x \otimes y = x \times y$

Then $(\mathcal{H},\oplus,\otimes)$ is a semihyperring.
\end{itemize}
\end{example}

An element $0$ of a semihyperring $(\mathcal{H},\oplus,\otimes)$ is called a \emph{zero} of $(\mathcal{H},\oplus,\otimes)$ if $x \oplus 0$ =  $0 \oplus x$ = $\lbrace x \rbrace$ and  $x \otimes 0 = 0 \otimes x = 0$~\cite{chaopraknoi2008}.

The set of integers is denoted by $\mathbb{Z}$, with $\mathbb{Z}_+$
and $\mathbb{Z}_-$ denoting the sets of positive integers and negative
integers respectively. Elements of the set $\mathcal{H}$ are denoted by $x_i,y_i$ where $i \in \mathbb{Z}_+$.

We use following general convention as followed
by~\cite{eqbal2010,dudekV1996,dudek2001,davvazdudek2009}:

The sequence $x_{i}$, $x_{i+1}$, \ldots, $x_{m}$ is denoted by
$x_i^{m}$. 

The following term:
\begin{equation} \label{defeqnfm}
f(x_{1}, \ldots ,x_{i},y_{i+1}, \ldots ,y_{j},z_{j+1} , \ldots , z_{m}) 
\end{equation}
is represented as: 
\begin{equation} \label{defeqnoff}
f(x_1^{i},y_{i+1}^{j},z_{j+1}^{m})
\end{equation}

In the case when $y_{i+1} = \ldots = y_{j} = y$, then \eqref{defeqnoff}
is expressed as:
\[f(x_{1}^i , \stackrel{(j-i)}{y} , z_{j+1}^m)\] 

\begin{definition} 

A non-empty set \(\mathcal{H}\) with an $m$-ary hyperoperation $f:\mathcal{H}^m \rightarrow \mathcal{P^{*}}(\mathcal{H})$ is called an \emph{$m$-ary hypergroupoid} and is denoted as \((\mathcal{H},f)\). An $m$-ary hypergroupoid \((\mathcal{H},f)\) is called an \emph{$m$-ary semihypergroup} if and only if the following associative axiom holds :
\[f(x_{1}^{i},f(x_i^{m+i-1}),x_{m+i}^{2m-1}) = f(x_{1}^{j},f(x_j^{m+j-1}),x_{m+j}^{2m-1})\]
for all $i,j\in \{1,2,\ldots,m\}$ and $x_1,x_2,\ldots,x_{2m-1} \in \mathcal{H}$~\cite{davvazVou2006}.

\end{definition}

\begin{definition} 

Element  $e$ is called \emph{identity element} of hypergroup \((\mathcal{H},f)\) if 

$x \in f(\underbrace{e,\ldots,e}_{i-1},x,\underbrace{e,\ldots,e}_{n-i})$ 

for all $x \in \mathcal{H}$ and $1 \leq i \leq n$~\cite{davvazVou2006}.

\end{definition}

\begin{definition} \label{n_ary_groupoid}

 A non empty set $\mathcal{H}$ with an $n$-ary operation $g$ is called an \emph{$n$-ary groupoid} and is denoted by ($\mathcal{H}$,$g$)~\cite{dudek2001}.

\end{definition}

\begin{definition} \label{def_semigroup}

An $n$-ary groupoid $(\mathcal{H},g)$ is called an \emph{$n$-ary semigroup} if $g$ is associative, i.e.,
 \[g(x_{1}^{i},g(x_i^{n+i-1}),x_{n+i}^{2n-1}) = g(x_{1}^{j},g(x_j^{n+j-1}),x_{n+j}^{2n-1})\]
for all $i,j\in \{1,2,\ldots,n\}$ and $x_1,x_2,\ldots,x_{2n-1} \in \mathcal{H}$ ~\cite{dudek2001}.

\end{definition}

\section{Definitions and Examples of $(m,n)$-Semihyperring} \label{def_semihyp}

\begin{definition} \label{def_sem}

$(\mathcal{H},f,g)$ is an $(m,n)$-semihyperring  which satisfies the following axioms:

\begin{itemize}
 \item[(i)]  $(\mathcal{H},f)$ is a $m$-ary semihypergroup
 \item[(ii)] $(\mathcal{H},g)$ is an $n$-ary semigroup and 
 \item[(iii)] $g$ is distributive over $f$ i.e.,

$g(x_1^{i-1}, f(a_{1}^{m}), x_{i+1}^{n})$= $f(g(x_1^{i-1}, a_{1},x_{i+1}^{n}) , \ldots , g(x_1^{i-1}, a_{m}, x_{i+1}^{n})).$
\end{itemize}

\end{definition}

\begin{remark} \label{rem_sem}
An $(m,n)$-semihyperring is called \emph{weak distributive} if it satisfies Definition \ref{def_sem} (i), (ii) and the following: 
 
$g(x_1^{i-1}, f(a_{1}^{m}), x_{i+1}^{n})$ $\subseteq$ $f(g(x_1^{i-1}, a_{1},x_{i+1}^{n}) , \ldots , g(x_1^{i-1}, a_{m}, x_{i+1}^{n})).$

\end{remark}
Remark \ref{rem_sem} is generalization of ~\cite{ameri2005}.

\eat{
\begin{definition} \label{commutative_sem}
An $(m,n)$-semihyperring $(\mathcal{H},f,g)$ is called \emph{commutative} if 

$g(x_1 , x_2 , \ldots , x_n)$ = $g(x_{\eta(1)} , x_{\eta(2)} ,
  \ldots , x_{\eta(n)})$

  for every permutation $\eta$ of $\{1,2, \ldots, n\}$ ~\cite{timm1967, eqbal2010}, $\forall$ $x_1$,$ x_2$,$ \ldots, x_n$ $\in
  \mathcal{H}$.

\end{definition}
}

\begin{example} \label{sem_exp1}

Let $\mathbb{Z}$ be the set of all integers. Let the binary hyperoperation $\oplus$ and an $n$-ary operation $g$ on $\mathbb{Z}$ which are defined as  follows:

$x_1 \oplus x_2$ = $\lbrace x_1, x_2 \rbrace$ and

$g(x_1,x_2,\ldots, x_n) = \prod_{i=1}^n x_i$.

Then $(\mathbb{Z},\oplus,g)$ is called a \emph{$(2,n)$-semihyperring}.

\end{example}

Example ~\ref{sem_exp1} is generalization of Example 1 of ~\cite{davvaz2009}.  

\begin{definition} \label{def_identityelem}

Let $e$ be the \textit{hyper additive identity element} of hyperoperation $f$ and $e'$ be \textit{multiplicative identity element} of operation $g$ then
          \[x \in f(\underbrace{e , \ldots ,e}_{i-1} , x , \underbrace{e , \ldots , e}_{m-i})\] 
for all $x \in \mathcal{H}$ and $1 \leq i \leq m$ and 
\[y=g(\underbrace{e', \ldots , e'}_{j-1},y,\underbrace{e' , \ldots , e'}_{n-j})\] 
for all $y \in \mathcal{H}$ and $1 \leq j \leq n$.

\end{definition}

\begin{definition} 

An element $\mathbf{0}$ of an $(m,n)$-semihyperring $(\mathcal{H},f,g)$ is called a \emph{zero} of $(\mathcal{H},f,g)$ if 
\[f(\underbrace{\mathbf{0},\ldots,\mathbf{0}}_{m-1},x) = f(x,\underbrace{\mathbf{0},\ldots,\mathbf{0}}_{m-1})=x\] for all $x \in \mathcal{H}$. 
\[g(\underbrace{\mathbf{0},\ldots,\mathbf{0}}_{n-1},y) = g(y,\underbrace{\mathbf{0},\ldots,\mathbf{0}}_{n-1}) = \mathbf{0}\] for all $y \in \mathcal{H}$. 

\end{definition}

\begin{remark}
Let $(\mathcal{H},f,g)$ be an $(m,n)$-semihyperring and $e$ and $e'$ be hyper additive identity and multiplicative identity elements respectively, then we can obtain the additive hyper operation and multiplication as follows:
\[ \langle x,y \rangle  = f(x,\underbrace{e,\ldots, e}_{m-2}, y)\]
and $x \times y = g(x,\underbrace{e',\ldots,e'}_{n-2},y)$
for all $x,y \in \mathcal{H}$. 
\end{remark}

\begin{definition}  

Let $(\mathcal{H},f,g)$ be an $(m,n)$-semihyperring.
\begin{itemize} 
\item[(i)]$(m,n)$-semihyperring $(\mathcal{H},f,g)$ is called \emph{zero sum free} if and only if $\mathbf{0} \in f(x_1,x_2,\ldots,x_m)$ implies $x_1=x_2=,\ldots,=x_m=\mathbf{0}$.
\item[(ii)] $(m,n)$-semihyperring $(\mathcal{H},f,g)$ is called \emph{additively idempotent} if $(\mathcal{H},f)$ be a $m$-ary semihypergroup, i.e. if $f(x,x,\ldots,x) \in x$.
\end{itemize}

\end{definition}

\section{Properties of $(m,n)$-Semihyperring} \label{prop_semhyp}

\begin{definition} \label{def_ideal}
Let $(\mathcal{H},f,g)$ be an $(m,n)$-semihyperring. 
\begin{itemize}
\item[(i)]
An $m$-ary sub-semihypergroup $\mathcal{R}$ of $\mathcal{H}$ is called an \emph{$(m,n)$-sub-semihyperring} of $\mathcal{H}$ if $g(a_1^n) \in \mathcal{R}$, for all $a_1,a_2,\ldots,a_n \in \mathcal{R}$.
\item[(ii)]
An $m$-ary sub-semihypergroup $\mathcal{I}$ of $\mathcal{H}$ is called 
\begin{itemize}
\item[(a)]
a \emph{left hyperideal} of $\mathcal{H}$ if $g(a_1^{n-1},i) \in \mathcal{I}, \forall a_1,a_2,\ldots,a_{n-1} \in \mathcal{H}$ and $i \in \mathcal{I}$.
\item[(b)]
a \emph{right hyperideal} of $\mathcal{H}$ if $g(i,a_1^{n-1}) \in \mathcal{I}, \forall a_1,a_2,\ldots,a_{n-1} \in \mathcal{H}$ and $i \in \mathcal{I}$.

If $\mathcal{I}$ is both left and right hyperideal then it is called as an \emph{hyperideal} of $\mathcal{H}$.
\item[(c)] 
a left hyperideal $\mathcal{I}$ of an $(m,n)$-semihyperring of $\mathcal{H}$ is called \emph{weak left hyperideal} of $\mathcal{H}$ if for $i \in \mathcal{I}$ and $x_1,x_2,\ldots,x_{m-1}$ $\in \mathcal{H}$ then

$f(i,x_1^{m-1}) \subseteq \mathcal{I}$ or $f(x_1^{m-1},i) \subseteq \mathcal{I}$ implies $x_1,x_2,\ldots,x_{m-1} \in \mathcal{I}$.
 
\end{itemize}

\end{itemize}

\end{definition} 
Definition~\ref{def_ideal} is generalization of ~\cite{ameri1999}.

\begin{proposition} \label{prop_hyp}

A left hyperideal of an $(m,n)$-semihyperring is an $(m,n)$-sub-semihyperring.

\end{proposition}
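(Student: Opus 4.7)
The plan is essentially to unwind the two definitions and observe that the left hyperideal condition is strictly stronger than the sub-semihyperring condition on the $g$-side, while the $m$-ary sub-semihypergroup condition is literally part of both definitions.

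First I would note that, by Definition~\ref{def_ideal}(ii), a left hyperideal $\mathcal{I}$ of $(\mathcal{H},f,g)$ is already an $m$-ary sub-semihypergroup of $\mathcal{H}$, so the $f$-part of the sub-semihyperring requirement in Definition~\ref{def_ideal}(i) is satisfied for free. Hence the only thing left to verify is the $g$-closure condition: $g(a_1^n)\in\mathcal{I}$ whenever $a_1,a_2,\ldots,a_n\in\mathcal{I}$.

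For the $g$-closure step I would fix arbitrary elements $a_1,\ldots,a_n\in\mathcal{I}$. Since $\mathcal{I}\subseteq\mathcal{H}$, the first $n-1$ of them may in particular be viewed as elements of $\mathcal{H}$, while $a_n\in\mathcal{I}$. Applying the left hyperideal property from Definition~\ref{def_ideal}(ii)(a) with $i:=a_n$ and the $a_1^{n-1}$ taken from $\mathcal{H}$, I obtain $g(a_1^{n-1},a_n)\in\mathcal{I}$, which is precisely $g(a_1^n)\in\mathcal{I}$. This gives the required closure and completes the verification against Definition~\ref{def_ideal}(i).

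There is no real obstacle here; the statement is essentially a bookkeeping consequence of the definitions, and the only thing to be careful about is to note that the left hyperideal axiom allows its first $n-1$ slots to range over all of $\mathcal{H}$, and in particular over $\mathcal{I}\subseteq\mathcal{H}$, so the specialization is legitimate.
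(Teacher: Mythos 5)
Your proof is correct: the $f$-part is indeed built into the definition of a left hyperideal (which is required to be an $m$-ary sub-semihypergroup), and the $g$-closure follows by specializing the slots $a_1,\ldots,a_{n-1}$, which the left hyperideal axiom allows to range over all of $\mathcal{H}$, to elements of $\mathcal{I}\subseteq\mathcal{H}$ with $i:=a_n$. The paper itself gives no proof of Proposition~\ref{prop_hyp}, evidently regarding it as immediate from Definition~\ref{def_ideal}, and your argument is precisely that intended unwinding of the definitions.
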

 
\begin{definition} 

Let $(\mathcal{H},f,g)$ and $(\mathcal{S},f',g')$ be two $(m,n)$-semihyperrings. The mapping $\sigma: \mathcal{H} \rightarrow \mathcal{S}$ is called a \emph{homomorphism} if following condition is satisfied for all $x_1,x_2,\ldots,x_m$, $y_1,y_2,\ldots,y_n \in \mathcal{H}$.

\[\sigma(f(x_1,x_2,\ldots,x_m))=f'(\sigma(x_1),\sigma(x_2),\ldots,\sigma(x_m))\] and 

\[\sigma(g(y_1,y_2,\ldots,y_n))=g'(\sigma(y_1),\sigma(y_2),\ldots,\sigma(y_n)).\]

\end{definition}

\begin{remark} \label{rem_inc_homom}

Let $(\mathcal{H},f,g)$ and $(\mathcal{S},f',g')$ be two $(m,n)$-semihyperrings. The mapping $\sigma: \mathcal{H} \rightarrow \mathcal{S}$ for all $x_1,x_2,\ldots,x_m$, $y_1,y_2,\ldots,y_n \in \mathcal{H}$ is called an \emph{inclusion homomorphism} if following relations hold: 

\[\sigma(f(x_1,x_2,\ldots,x_m)) \subseteq f'(\sigma(x_1),\sigma(x_2),\ldots,\sigma(x_m))\] and 

\[\sigma(g(y_1,y_2,\ldots,y_n)) \subseteq g'(\sigma(y_1),\sigma(y_2),\ldots,\sigma(y_n))\]

\end{remark}

Remark \ref{rem_inc_homom} is generalization of ~\cite{bdavvaz1999}.

\begin{theorem} \label{homomorphism} 
Let $(\mathcal{R}, f, g)$, $(\mathcal{S}, f', g')$ and $(\mathcal{T} , f'', g'')$ be $(m, n)$-semihyperrings.
If mappings $ \sigma : (\mathcal{R}, f, g) \longrightarrow (\mathcal{S}, f', g')$ and
$\delta : (S, f', g') \longrightarrow (T , f'', g'')$ are homomorphisms, then  $\sigma \circ \delta : (R, f, g) \longrightarrow (T , f'', g'')$ is also a homomorphism.
\end{theorem}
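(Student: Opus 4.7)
The plan is to verify the two defining equalities of homomorphism for the composition $\tau = \sigma \circ \delta$ from $\mathcal{R}$ to $\mathcal{T}$ (which, as the statement indicates, sends $x \in \mathcal{R}$ to $\delta(\sigma(x)) \in \mathcal{T}$) by a direct two-step chain that invokes each of $\sigma$ and $\delta$ in succession. Throughout, I would use the standard convention that a map on a semihyperring extends to subsets via pointwise image, so that expressions like $\sigma(f(x_1,\ldots,x_m))$ and $\delta(f'(\sigma(x_1),\ldots,\sigma(x_m)))$ are unambiguous.

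First I would fix arbitrary $x_1,\ldots,x_m \in \mathcal{R}$ and expand
\[
\tau\bigl(f(x_1,\ldots,x_m)\bigr) = \delta\bigl(\sigma(f(x_1,\ldots,x_m))\bigr).
\]
Since $\sigma$ is a homomorphism for the $m$-ary hyperoperation, the inner term becomes $f'(\sigma(x_1),\ldots,\sigma(x_m))$. Applying $\delta$ and invoking its homomorphism property for $f'$ then yields $f''(\delta(\sigma(x_1)),\ldots,\delta(\sigma(x_m))) = f''(\tau(x_1),\ldots,\tau(x_m))$, which is the required identity for $f$.

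Second, I would run the parallel argument for the $n$-ary operation $g$: picking $y_1,\ldots,y_n \in \mathcal{R}$, I would chain the two homomorphism equalities in the same order to obtain $\tau(g(y_1,\ldots,y_n)) = g''(\tau(y_1),\ldots,\tau(y_n))$. Since $g, g', g''$ are ordinary $n$-ary operations rather than hyperoperations, the set-valued bookkeeping from the first part is not needed here, and each step is a pure substitution.

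The main obstacle is purely notational rather than mathematical: one must fix the convention by which a homomorphism is extended to subsets (so that the first step above is literally an equality of subsets of $\mathcal{S}$, not merely a containment) and keep straight the order in which $\sigma$ and $\delta$ act in the paper's notation for composition. Once those conventions are pinned down, no axiom of the $(m,n)$-semihyperring structure, neither $m$-ary associativity nor the distributivity of $g$ over $f$ nor anything involving identity or zero elements, needs to be invoked; the conclusion follows from two back-to-back applications of the homomorphism hypothesis on each of the two operations.
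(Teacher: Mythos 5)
Your proof is correct, and it is exactly the standard direct verification that the paper itself omits (its proof reads only ``Omitted as obvious''), so there is nothing to compare against beyond confirming that your two back-to-back applications of the homomorphism conditions, with the pointwise-image convention for the hyperoperation and no appeal to any other axiom, are precisely what is needed. You were also right to flag and resolve the composition-order quirk: despite the notation $\sigma \circ \delta$, the only well-typed reading is $x \mapsto \delta(\sigma(x))$, which is what you proved.
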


\begin{proof}
Omitted as obvious.
\end{proof}

\begin{definition} \label{def_congr}

Let $\cong$ be an equivalence relation on the $(m,n)$-semihyperring $(\mathcal{H}, f ,g)$ and $A_i$ and $B_i$ be the subsets of $\mathcal{H}$ for all $1 \leq i \leq m$. We define $A_i \cong B_i$ for all $a_i \in A_i$ there exists $b'_i \in B_i$ such that $a_i \cong b'_i$ holds true and for all $b_i \in B_i$ there exists $a'_i \in A_i$ such that $a'_i \cong b_i$ holds true~\cite{mksen2010}.

  An equivalence relation $\cong$ is called a \emph{congruence relation} on $\mathcal{H}$ if following hold: 

\begin{itemize}

\item[(a)] for all $a_1,a_2,\ldots,a_m$, $b_1,b_2,\ldots,b_m$ $\in \mathcal{H}$; if $\lbrace a_i \rbrace \cong \lbrace b_i \rbrace$ then $\lbrace f(a_1^m) \rbrace \cong \lbrace f(b_1^m) \rbrace$, where $1 \leq i \leq m$ and,

\item[(b)] for all $x_1,x_2,\ldots,x_n$, $y_1,y_2,\ldots,y_n \in \mathcal{H}$; if $x_j \cong y_j$ then $g(x_1^n) \cong g(y_1^n)$, where  $1 \leq j \leq n$ ~\cite{Burris1981}.
\end{itemize}

\end{definition}

\begin{lemma} \label{lemma_cong}

  Let ($\mathcal{H}, f, g)$ be an $(m,n)$-semihyperring and $\cong$ be the congruence relation on $\mathcal{H}$ then    

\begin{itemize}

\item[(i)]  
if $\lbrace x\rbrace \cong \lbrace y\rbrace$ then 
  \[ \lbrace f(x, a_{1}^{m-1}) \rbrace \cong \lbrace f(y, a_{1}^{m-1}) \rbrace \]   
  for all $x,y,a_1,a_2,\ldots,a_{m} \in \mathcal{H}$

\item[(ii)]
if $x \cong y$ then following holds:
 \[ g(a_{1}^{i-1},x,a_{i+1}^{n}) \cong g(a_{1}^{i-1},y,a_{i+1}^{n})\]   
  for all $x,y,a_1,a_2,\ldots,a_{n} \in \mathcal{H}$

\end{itemize}

\end{lemma}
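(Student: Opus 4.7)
The plan is to obtain both parts as immediate specializations of the two congruence clauses in Definition~\ref{def_congr}, using reflexivity of $\cong$ to hold all but one coordinate fixed.

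For part~(i), I would invoke clause~(a) applied to the two $m$-tuples $(x, a_1, \ldots, a_{m-1})$ and $(y, a_1, \ldots, a_{m-1})$. The hypothesis $\{x\}\cong\{y\}$ supplies the coordinate-wise relation in the first position, while reflexivity of $\cong$ supplies $\{a_j\}\cong\{a_j\}$ in positions $2,\ldots,m$. Clause~(a) then yields $\{f(x,a_1^{m-1})\}\cong\{f(y,a_1^{m-1})\}$, which is exactly the stated conclusion.

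For part~(ii), the argument is entirely parallel, using clause~(b) in place of~(a). I would apply it to the $n$-tuples $(a_1,\ldots,a_{i-1},x,a_{i+1},\ldots,a_n)$ and $(a_1,\ldots,a_{i-1},y,a_{i+1},\ldots,a_n)$: reflexivity of $\cong$ handles every coordinate $j\neq i$, and the hypothesis $x\cong y$ handles the $i$-th coordinate. Clause~(b) then delivers $g(a_1^{i-1},x,a_{i+1}^n)\cong g(a_1^{i-1},y,a_{i+1}^n)$.

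The only mildly delicate point is the notational shift between the singletons $\{x\},\{y\}$ in~(i) and the bare elements $x,y$ in~(ii). By the subset-level definition of $\cong$ given at the start of Definition~\ref{def_congr}, a singleton-to-singleton comparison $\{u\}\cong\{v\}$ reduces to the elementwise relation $u\cong v$, so the two hypothesis styles plug into the corresponding congruence clauses without friction. I anticipate no real obstacle: the lemma is essentially a restatement of the congruence axioms with all but one argument held constant, and the proof should fit in a few lines.
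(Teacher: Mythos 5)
Your proof is correct, but it follows a genuinely different route from the paper's own proof (which is present in the source, though suppressed from the compiled text). You specialize clauses (a) and (b) of Definition~\ref{def_congr} directly, letting reflexivity of the equivalence relation supply the relation $\{a_j\}\cong\{a_j\}$ (respectively $a_j\cong a_j$) in the coordinates that are held fixed, and the hypothesis supply the one varying coordinate; the singleton-versus-element bookkeeping you flag is exactly the right point to check, and it does reduce to reflexivity as you say. The paper instead argues iteratively with identity elements: it rewrites $\{x\}\cong\{y\}$ as $f(x,e,\ldots,e)\cong f(y,e,\ldots,e)$ for a hyperadditive identity $e$, applies $f$ with $a_1$ to both sides, reshuffles $a_1$ into the inner $f$, strips the identities back off, and repeats until all of $a_1,\ldots,a_{m-1}$ are absorbed (and analogously for $g$ with the multiplicative identity $e'$). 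Your argument is not only shorter but strictly more general: the paper's proof tacitly assumes that the identities $e$ and $e'$ exist, which is neither part of Definition~\ref{def_sem} nor a hypothesis of the lemma, and its reshuffling step (moving $a_1$ past the block of identities into the inner $f$) is not justified by $m$-ary associativity alone --- it would need commutativity or a two-sided absorption property of $e$. Your specialization of the congruence axioms avoids both issues and uses nothing beyond what the lemma actually assumes, so it is the preferable proof.
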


\eat{
\begin{proof} 
\begin{itemize}
\item[(i)]
Given that

\begin{equation} \label{cong_eqn_1}
\lbrace x\rbrace \cong \lbrace y\rbrace
\end{equation}

for all $x,y \in \mathcal{H}$. Let $e$ be the hyper additive identity element, then (\ref{cong_eqn_1}) can be represented as follows:  

\begin{equation} \label{cong_eqn_2}
f(x,\underbrace{e,\ldots, e}_{m-1}) \cong f(y,\underbrace{e,\ldots, e}_{m-1})
\end{equation}

do $f$ hyperoperation on both sides of (\ref{cong_eqn_2}) with $a_1$ to get 

\[f(f(x,\underbrace{e,\ldots, e}_{m-1}),a_1,\underbrace{e,\ldots, e}_{m-2}) \cong f(f(y,\underbrace{e,\ldots, e}_{m-1}),a_1, \underbrace{e,\ldots, e}_{m-2})\]

\[f(f(x,a_1, \underbrace{e,\ldots, e}_{m-2}),\underbrace{e,\ldots, e}_{m-1}) \cong f(f(y, a_1,\underbrace{e,\ldots, e}_{m-2}), \underbrace{e,\ldots, e}_{m-1})\]

\begin{equation} \label{cong_eqn_3}
\lbrace f(x,a_1, \underbrace{e,\ldots, e}_{m-2})\rbrace \cong \lbrace f(y, a_1,\underbrace{e,\ldots, e}_{m-2})\rbrace
\end{equation}
 
do $f$ hyperoperation on both sides of (\ref{cong_eqn_3}) with $a_2$ to get the following equation: 

\begin{equation} \label{cong_eqn_4}
f(f(x,a_1,\underbrace{e,\ldots, e}_{m-2}),a_2,\underbrace{e,\ldots, e}_{m-2}) \cong f(f(y,a_1, \underbrace{e,\ldots, e}_{m-2}),a_2, \underbrace{e,\ldots, e}_{m-2})
\end{equation}

\[f(f(x,a_1,a_2, \underbrace{e,\ldots, e}_{m-3}),\underbrace{e,\ldots, e}_{m-1}) \cong f(f(y,a_1, a_2, \underbrace{e,\ldots, e}_{m-3}), \underbrace{e,\ldots, e}_{m-1})\]

\[ \lbrace f(x,a_1,a_2, \underbrace{e,\ldots, e}_{m-3}) \rbrace \cong \lbrace f(f(y,a_1, a_2, \underbrace{e,\ldots, e}_{m-3}), \underbrace{e,\ldots, e}_{m-1})\rbrace \]

Similarly we can do $f$ hyperoperation till $a_{m-1}$ to get the following result:
\[\lbrace f(x,a_1,a_2,\ldots,a_{m-1} ) \rbrace \cong \lbrace f(y,a_1, a_2,\ldots,a_{m-1}) \rbrace \]

Which can also be represented as:

\[\lbrace f(x,a_{1}^{m-1}) \rbrace \cong \lbrace f(y,a_1^{m-1}) \rbrace \]

\item[(ii)]
Given that

\begin{equation} \label{cong_eqn_11}
 x \cong  y
\end{equation}

for all $x,y \in \mathcal{H}$. Let $e'$ be the multiplicative identity element

\begin{equation} \label{cong_eqn_12}
g(x,\underbrace{e',\ldots, e'}_{n-1}) \cong g(y,\underbrace{e',\ldots, e'}_{n-1})
\end{equation}

do $g$ hyperoperation on both sides of (\ref{cong_eqn_12}) with $a_1$ to get 

\[g(g(x,\underbrace{e',\ldots, e'}_{n-1}),a_1,\underbrace{e',\ldots, e'}_{n-2}) \cong g(g(y,\underbrace{e',\ldots, e'}_{n-1}),a_1, \underbrace{e',\ldots, e'}_{n-2})\]

\[g(g(x,a_1, \underbrace{e',\ldots, e'}_{n-2}),\underbrace{e',\ldots, e'}_{n-1}) \cong g(g(y, a_1,\underbrace{e',\ldots, e'}_{n-2}), \underbrace{e',\ldots, e'}_{n-1})\]

\begin{equation} \label{cong_eqn_13}
g(x,a_1, \underbrace{e',\ldots, e'}_{n-2}) \cong g(y, a_1,\underbrace{e',\ldots, e'}_{n-2})
\end{equation}
 
do $g$ hyperoperation on both sides of (\ref{cong_eqn_13}) with $a_2$ to get the following equation: 

\begin{equation} \label{cong_eqn_14}
g(g(x,a_1,\underbrace{e',\ldots, e'}_{n-2}),a_2,\underbrace{e',\ldots, e'}_{n-2}) \cong g(g(y,a_1, \underbrace{e',\ldots, e'}_{n-2}),a_2, \underbrace{e',\ldots, e'}_{n-2})
\end{equation}

\[g(g(x,a_1,a_2, \underbrace{e',\ldots, e'}_{n-3}),\underbrace{e',\ldots, e'}_{n-1}) \cong g(g(y,a_1, a_2, \underbrace{e',\ldots, e'}_{n-3}), \underbrace{e',\ldots, e'}_{n-1})\]

\[g(x,a_1,a_2, \underbrace{e',\ldots, e'}_{n-3}) \cong g(g(y,a_1, a_2, \underbrace{e',\ldots, e'}_{n-3}), \underbrace{e',\ldots, e'}_{n-1}) \]

Similarly we can do $g$ operation till $a_{n-1}$ to get the following result:
\[ g(x,a_1^{n-1} ) \cong g(y,a_1^{n-1}). \qedhere \] 

\end{itemize}

\end{proof}
}

\begin{theorem}
 Let ($\mathcal{H}, f, g)$ be an $(m,n)$-semihyperring and $\cong$ be the congruence relation on $\mathcal{H}$. Then if $\lbrace a_i \rbrace \cong \lbrace b_i \rbrace$ and $\lbrace x_j \rbrace \cong \lbrace y_j \rbrace$ for all $a_i,b_i,x_j,y_j \in \mathcal{H}$ and $i,j \in \lbrace 1,m\rbrace$
then the following is obtained:

 for all $1 \leq k \leq m$ 
\[\lbrace f(a_1^k,x_{k+1}^m)\rbrace \cong \lbrace f(b_1^k,y_{k+1}^m) \rbrace\]

\end{theorem}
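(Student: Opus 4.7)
The plan is to apply Lemma \ref{lemma_cong}(i) repeatedly, substituting one argument of $f$ at a time in order to transform the tuple $(a_1,\ldots,a_k,x_{k+1},\ldots,x_m)$ into $(b_1,\ldots,b_k,y_{k+1},\ldots,y_m)$, and then to chain the resulting congruences using the transitivity of $\cong$.

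More concretely, I would proceed by a finite induction on the number of positions where the current tuple still disagrees with the target tuple $(b_1,\ldots,b_k,y_{k+1},\ldots,y_m)$. The base case (no disagreements) is immediate from reflexivity of $\cong$. For the inductive step, assume the first $r-1$ positions already match the target, and consider position $r$. If $r\leq k$, the hypothesis gives $\{a_r\}\cong\{b_r\}$; if $r>k$, it gives $\{x_r\}\cong\{y_r\}$. In either case, one application of Lemma \ref{lemma_cong}(i) produces a congruence between the tuple at stage $r-1$ (applied under $f$) and the tuple at stage $r$ (applied under $f$). Composing at most $m$ such single-slot replacements via the transitivity of $\cong$ yields the desired
\[\{f(a_1^k,x_{k+1}^m)\}\cong\{f(b_1^k,y_{k+1}^m)\}.\]

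The main obstacle is that Lemma \ref{lemma_cong}(i) is written for a substitution performed in the \emph{first} argument slot of $f$, while the induction above requires substitution at an arbitrary position $r$. The way around this is to use the $m$-ary associativity of $f$ together with the hyper additive identity $e$ from Definition \ref{def_identityelem}: any argument can be rotated into the leading slot by padding with copies of $e$ and reassociating, as in the technique implicit in the omitted proof of Lemma \ref{lemma_cong}. Once the position-independent form of Lemma \ref{lemma_cong}(i) is in hand, the inductive chain goes through without further incident, and we may of course shortcut the whole argument by observing that the statement is essentially a direct unpacking of Definition \ref{def_congr}(a), since the split between the $a_i$'s and the $x_j$'s is merely notational.
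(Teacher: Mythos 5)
Your closing observation is the real proof, and it is correct: setting $c_i=a_i$, $d_i=b_i$ for $1\leq i\leq k$ and $c_i=x_i$, $d_i=y_i$ for $k<i\leq m$, the hypothesis says precisely that $\lbrace c_i\rbrace \cong \lbrace d_i\rbrace$ for every $i$, and Definition \ref{def_congr}(a) then yields $\lbrace f(c_1^m)\rbrace \cong \lbrace f(d_1^m)\rbrace$, which is the conclusion verbatim. No induction, no transitivity, and not even reflexivity is needed. This is also genuinely different from (and cleaner than) what the paper does: the paper's entire proof is the remark that the theorem can be proved ``similar to Lemma \ref{lemma_cong}'', i.e.\ by the identity-padding manipulations of its omitted proof of that lemma, whereas your last sentence reduces the theorem to a pure renaming instance of the definition of congruence.

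The inductive scaffolding that makes up the bulk of your proposal, however, has a genuine gap, which you only half-acknowledge. Lemma \ref{lemma_cong}(i) licenses replacement in the \emph{first} slot of $f$ only, and your fix --- rotating an interior argument into the leading slot ``by padding with copies of $e$ and reassociating'' --- is not available in this generality. First, Definition \ref{def_sem} does not require an $(m,n)$-semihyperring to possess a hyperadditive identity; Definition \ref{def_identityelem} describes $e$ when it exists, but existence is not an axiom. Second, and more fundamentally, $m$-ary associativity never permits permuting arguments: moving an element past a block of $e$'s changes the order of the argument sequence, which is a commutativity assumption, not a consequence of reassociation. (The same hidden assumption appears in the paper's own suppressed proof of Lemma \ref{lemma_cong}, so the paper's route inherits the defect too.) If one insists on a slot-by-slot chain, the correct device is again Definition \ref{def_congr}(a) itself: to replace only slot $r$, apply (a) with $\lbrace c_i\rbrace\cong\lbrace c_i\rbrace$ by reflexivity in every unchanged slot and $\lbrace a_r\rbrace\cong\lbrace b_r\rbrace$ in slot $r$, then chain the $m$ resulting congruences by transitivity of the set-extension of $\cong$. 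But once (a) is invoked there is no reason not to apply it once to all slots simultaneously: keep the final sentence of your proposal and discard the induction.
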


\begin{proof}
Can be proved similar to Lemma \ref{lemma_cong}.  
\end{proof}

\begin{definition} 

Let $\cong$ be a congruence on $\mathcal{H}$. Then the \emph{quotient} of $\mathcal{H}$ by $\cong$, written as $\mathcal{H}/ \cong$, is the algebra whose universe is $\mathcal{H}/ \cong$ and whose fundamental operation satisfy 

$f^{\mathcal{H}/\cong}(x_1,x_2,\ldots,x_m)$ = $ f^{\mathcal{H}}(x_1,x_2,\ldots,x_m)/ \cong $

where $x_1,x_2,\ldots,x_m \in \mathcal{H}$~\cite{Burris1981}.

\end{definition}

\begin{theorem} 

Let $(\mathcal{H},f,g)$ be an $(m,n)$-semihyperring  and $\cong$ be the equivalence relation and strongly regular on $\mathcal{H}$ then $(\mathcal{H}/ \cong, f, g)$ is also an $(m,n)$-semihyperring.

\end{theorem}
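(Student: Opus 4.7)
The plan is to define the induced $m$-ary hyperoperation $\bar{f}$ and $n$-ary operation $\bar{g}$ on the quotient set $\mathcal{H}/\cong$ via representatives and then verify, in order, (a) well-definedness, (b) the $m$-ary semihypergroup axiom, (c) the $n$-ary semigroup axiom, and (d) distributivity of $\bar{g}$ over $\bar{f}$. Writing $[x]$ for the $\cong$-class of $x$, I would set
\[\bar{f}([a_1],\ldots,[a_m]) \;=\; \{\,[c] : c \in f(a_1,\ldots,a_m)\,\}, \qquad \bar{g}([x_1],\ldots,[x_n]) \;=\; [g(x_1,\ldots,x_n)].\]

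For well-definedness I would use that $\cong$ is \emph{strongly} regular (not merely regular), which I take to mean: if $a_i \cong b_i$ for $1\le i\le m$ then every element of $f(a_1^m)$ is $\cong$-related to every element of $f(b_1^m)$, and similarly that $g(a_1^n)\cong g(b_1^n)$ whenever $a_j\cong b_j$ for all $j$. Under this hypothesis, changing the representatives $a_i$ to any $b_i \in [a_i]$ yields the same set of classes on the right-hand side of the definition of $\bar{f}$ and the same class for $\bar{g}$; this is precisely what strong regularity buys us beyond the congruence property given in Definition~\ref{def_congr}. I would also check that $\bar{f}$ takes values in $\mathcal{P}^*(\mathcal{H}/\cong)$ (nonemptiness is inherited from $f$).

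Next, for the $m$-ary associative law on $(\mathcal{H}/\cong,\bar{f})$, I would fix $i,j\in\{1,\ldots,m\}$ and classes $[x_1],\ldots,[x_{2m-1}]$ and compute both sides by lifting to $\mathcal{H}$: applying $\bar{f}$ to $[x_1],\ldots,[x_{i-1}],\bar{f}([x_i],\ldots,[x_{m+i-1}]),[x_{m+i}],\ldots,[x_{2m-1}]$ unfolds, via the definition of $\bar{f}$, to the set of classes $[c]$ with $c\in f(x_1^{i-1},f(x_i^{m+i-1}),x_{m+i}^{2m-1})$; the $m$-ary semihypergroup axiom for $f$ in $\mathcal{H}$ then makes the two unfoldings equal as sets, hence as sets of classes. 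The $n$-ary associative law for $\bar{g}$ is the easier analogue: both sides reduce to $[g(x_1^{i-1},g(x_i^{n+i-1}),x_{n+i}^{2n-1})]$, and associativity of $g$ on $\mathcal{H}$ (Definition~\ref{def_semigroup}) finishes the step.

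Finally, for distributivity I would evaluate $\bar{g}([x_1^{i-1}],\bar{f}([a_1^m]),[x_{i+1}^n])$ by unfolding: by definition this is the union over $c\in f(a_1^m)$ of the single classes $[g(x_1^{i-1},c,x_{i+1}^n)]$, and by distributivity of $g$ over $f$ in $\mathcal{H}$ (Definition~\ref{def_sem}(iii)) this union is precisely $\{[d] : d\in f(g(x_1^{i-1},a_1,x_{i+1}^n),\ldots,g(x_1^{i-1},a_m,x_{i+1}^n))\}$, which is the right-hand side $\bar{f}(\bar{g}([x_1^{i-1}],[a_1],[x_{i+1}^n]),\ldots,\bar{g}([x_1^{i-1}],[a_m],[x_{i+1}^n]))$. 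The main obstacle is really only the well-definedness of $\bar{f}$: once strong regularity is invoked correctly to show that the set $\{[c]:c\in f(a_1^m)\}$ depends only on the classes $[a_i]$, the associativity and distributivity steps are essentially a transcription of the corresponding identities in $\mathcal{H}$.
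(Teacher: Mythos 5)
Your proof is correct, but there is nothing in the paper to compare it against: the paper states this theorem without any proof (it appears immediately after the definition of the quotient, with no argument attached), so your write-up fills a genuine omission rather than paralleling an existing argument. Your construction of $\bar{f}$ agrees with the paper's own definition of the quotient, namely $f^{\mathcal{H}/\cong}(x_1,\ldots,x_m)=f^{\mathcal{H}}(x_1,\ldots,x_m)/\cong$, and your three-step verification (well-definedness via strong regularity, then lifting the $m$-ary associativity and the distributivity of $g$ over $f$) is the standard and correct route; you are also right that well-definedness is the only point with real content, since it is exactly what makes the representative choices in the nested unions immaterial. One remark worth adding to your argument: strong regularity gives you more than well-definedness. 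Taking $b_i=a_i$ in your formulation, any two elements $c,c'\in f(a_1,\ldots,a_m)$ are $\cong$-related, so $\bar{f}([a_1],\ldots,[a_m])=\{[c]: c\in f(a_1,\ldots,a_m)\}$ is in fact a singleton. Hence the induced hyperoperation on $\mathcal{H}/\cong$ is single-valued, and the quotient is actually an $(m,n)$-\emph{semiring}, a fortiori an $(m,n)$-semihyperring; this is the $n$-ary version of the classical fact that strongly regular relations on semihypergroups yield quotient \emph{semigroups}, whereas mere regularity (the two-sided approximation condition, which is what Definition~\ref{def_congr} encodes) is all that is needed if one only wants the quotient to remain a semihypergroup. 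Noting this would both sharpen the conclusion and explain why the hypothesis of the theorem is stated as ``strongly regular'' rather than merely ``congruence.''
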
 

\begin{definition} \label{def_natural_map}

Let $(\mathcal{H},f,g)$ be an $(m,n)$-semihyperring and $\cong$ be the congruence relation. The natural map $v_{\cong}:\mathcal{H} \longrightarrow \mathcal{H} / \cong$ is defined by $v_{\cong} (a_i) = a_i/\cong$ and  $v_{\cong} (b_j) = b_j/\cong$ where $a_i, b_j \in \mathcal{H}$ for all $1 \leq i \leq m$, $1 \leq j \leq n$.

\end{definition}

\begin{theorem} \label{theorem_quot}

Let $\rho$ and $\sigma$ be two congruence relations on
 $(m,n)$-semihyperring $(\mathcal{H},f,g)$ such that $\rho \subseteq \sigma$. Then $\sigma / \rho = \lbrace (\rho(x), \rho(y)) \in \mathcal{H}/\rho \times \mathcal{H}/\rho : (x,y) \in \sigma \rbrace$ is a congruence on $\mathcal{H}/ \rho$ and $(\mathcal{H}/\rho) \diagup (\sigma/\rho) \cong \mathcal{H}/\sigma.$  

\end{theorem}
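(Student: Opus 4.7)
The plan is to follow the standard third isomorphism theorem of universal algebra, adapted to the $(m,n)$-semihyperring setting. First I would verify that $\sigma/\rho$ is a well-defined equivalence relation on $\mathcal{H}/\rho$: reflexivity, symmetry, and transitivity descend directly from $\sigma$, while independence of representatives uses $\rho \subseteq \sigma$, since if $\rho(x) = \rho(x')$ then $(x,x') \in \rho \subseteq \sigma$, so $(x,y) \in \sigma \iff (x',y) \in \sigma$ by transitivity, and analogously on the second coordinate.

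Next I would check the two clauses of Definition~\ref{def_congr} for $\sigma/\rho$ on $\mathcal{H}/\rho$. Assume $\{\rho(a_i)\}$ and $\{\rho(b_i)\}$ are $(\sigma/\rho)$-related for each $1 \le i \le m$, which unwinds to element-level $\sigma$-witnesses $(a_i, b_i') \in \sigma$ (and symmetrically). Since $\sigma$ is a congruence on $\mathcal{H}$, Definition~\ref{def_congr}(a) yields that $f(a_1^m)$ and $f(b_1^m)$ are $\sigma$-related as subsets of $\mathcal{H}$; passing each element to its $\rho$-class and using the representative-wise definition of $f^{\mathcal{H}/\rho}$ converts this into the required relation between $f^{\mathcal{H}/\rho}(\rho(a_1^m))$ and $f^{\mathcal{H}/\rho}(\rho(b_1^m))$ modulo $\sigma/\rho$. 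The $g$-clause is analogous and in fact simpler, since $g$ is an ordinary $n$-ary operation rather than a hyperoperation.

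For the isomorphism, I would define $\Phi : (\mathcal{H}/\rho)/(\sigma/\rho) \longrightarrow \mathcal{H}/\sigma$ by sending the $(\sigma/\rho)$-class of $\rho(x)$ to $\sigma(x)$. The biconditional chain $(\rho(x),\rho(y)) \in \sigma/\rho \iff (x,y) \in \sigma \iff \sigma(x) = \sigma(y)$ supplies well-definedness and injectivity simultaneously, and surjectivity is immediate from $x \mapsto \sigma(x)$ being onto. Compatibility of $\Phi$ with both $f$ and $g$ then reduces, via the representative-wise definitions of the three successive quotients, to identities already holding in $\mathcal{H}$.

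The main obstacle I expect is the congruence step for $\sigma/\rho$, because Definition~\ref{def_congr}(a) is phrased at the level of subsets rather than elements: I must show that whenever a $\sigma$-witness $d \in f(b_1^m)$ exists for some $c \in f(a_1^m)$, its image $\rho(d)$ serves as the witness in $f^{\mathcal{H}/\rho}(\rho(b_1^m)) = f(b_1^m)/\rho$, without conflating distinct $\rho$-classes that happen to share a $\sigma$-class. This is precisely where the hypothesis $\rho \subseteq \sigma$ does the work, and articulating it cleanly is the delicate bit of an otherwise routine argument.
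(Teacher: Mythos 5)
Your proposal is correct, and for the portion the paper actually proves it follows the same route: unwind $(\sigma/\rho)$-relatedness of $\rho$-classes to $\sigma$-witnesses in $\mathcal{H}$, invoke the congruence property of $\sigma$ (Definition~\ref{def_congr}), and push the resulting relation back down to $\mathcal{H}/\rho$. The difference is one of coverage rather than method. The paper's proof delegates the equivalence-relation check to \cite{skar2007}, verifies the congruence property in two lines, and stops there: the isomorphism $(\mathcal{H}/\rho) \diagup (\sigma/\rho) \cong \mathcal{H}/\sigma$, although part of the theorem statement, is never proved in the paper. You, by contrast, construct the map $\Phi$ explicitly, obtain well-definedness and injectivity simultaneously from the chain $(\rho(x),\rho(y)) \in \sigma/\rho \iff (x,y) \in \sigma \iff \sigma(x)=\sigma(y)$, note surjectivity, and reduce compatibility with $f$ and $g$ to identities already holding in $\mathcal{H}$ --- exactly the standard universal-algebra argument, and exactly the part a complete write-up of this theorem needs. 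You are also more careful on two points the paper glosses over: independence of representatives for $\sigma/\rho$ (which is where the hypothesis $\rho \subseteq \sigma$ is genuinely used) and the subset-level reading of the $f$-clause, where a $\sigma$-witness $d \in f(b_1^m)$ for $c \in f(a_1^m)$ must be converted into the witness $\rho(d) \in f^{\mathcal{H}/\rho}(\rho(b_1),\ldots,\rho(b_m))$ for $\rho(c)$. In short, the paper's proof buys brevity by citation and omission; yours buys an actually complete proof of the stated theorem.
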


\begin{proof}

Similar to \cite{skar2007}, we can deduce that $\sigma/\rho$ is an equivalence relation on $\mathcal{H}/\rho$. 
Suppose $(a_i\rho)(\sigma/\rho)(b_i\rho)$ for all $1 \leq i \leq m$ and $(c_j\rho)(\sigma/\rho)(d_j\rho)$ for all $1 \leq j \leq n$. Since $\sigma$ is congruence on $\mathcal{H}$ therefore $f(a_1^m) \sigma f(b_1^m)$ and $g(c_1^n) \sigma g(d_1^n)$ which implies $f(a_1^m)\rho (\sigma/\rho) f(b_1^m)\rho$ and $g(c_1^n)\rho (\sigma/\rho) g(d_1^n)\rho$ respectively, therefore $\sigma/\rho$ is a congruence on $\mathcal{H}/\rho$.
\qedhere
\end{proof} 

\begin{theorem} \label{theorem_natural_map}

The natural map from an $(m,n)$-semihyperring $(\mathcal{H},f,g)$ to the quotient $(\mathcal{H} / \cong,f,g)$ of the $(m,n)$-semihyperring is an onto homomorphism. 

\end{theorem}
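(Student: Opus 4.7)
The plan is to verify the two defining properties of a homomorphism directly from the definition of the natural map $v_\cong$ and the definition of the quotient operations, and then argue surjectivity as an immediate consequence of the construction of $\mathcal{H}/\cong$.

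First I would dispose of the ``onto'' claim: by construction, the universe of $\mathcal{H}/\cong$ consists precisely of the equivalence classes $a/\cong$ for $a\in\mathcal{H}$, and Definition~\ref{def_natural_map} gives $v_\cong(a)=a/\cong$, so every element of $\mathcal{H}/\cong$ lies in the image of $v_\cong$. This part is essentially immediate.

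Next I would verify the homomorphism conditions for both hyperoperations. Fix $x_1,\dots,x_m\in\mathcal{H}$. On the one hand, applying $v_\cong$ element-wise to the set $f(x_1,\dots,x_m)\subseteq\mathcal{H}$ yields
\[v_\cong(f(x_1,\dots,x_m))=\{u/\cong\,:\,u\in f(x_1,\dots,x_m)\}=f^{\mathcal{H}}(x_1,\dots,x_m)/\cong.\]
On the other hand, by the defining equation of the quotient operation,
\[f^{\mathcal{H}/\cong}(v_\cong(x_1),\dots,v_\cong(x_m))=f^{\mathcal{H}/\cong}(x_1/\cong,\dots,x_m/\cong)=f^{\mathcal{H}}(x_1,\dots,x_m)/\cong.\]
Comparing the two expressions gives the required identity for $f$. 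The same argument, run with $g$, $n$-tuples $y_1,\dots,y_n$, and the corresponding clause of the quotient definition, yields the identity for $g$.

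The one point that deserves attention, rather than an obstacle, is the implicit well-definedness of $f^{\mathcal{H}/\cong}$ and $g^{\mathcal{H}/\cong}$: the right-hand side of the quotient definition must not depend on the choice of representatives $x_i$ within each class $x_i/\cong$. This is exactly what the congruence clauses (a) and (b) of Definition~\ref{def_congr} guarantee (and what the preceding theorem on $\lbrace f(a_1^k,x_{k+1}^m)\rbrace \cong \lbrace f(b_1^k,y_{k+1}^m)\rbrace$ packages for mixed replacements). So I would cite that well-definedness briefly and then conclude that $v_\cong$ is an onto homomorphism.
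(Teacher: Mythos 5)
Your proposal is correct and takes essentially the same route as the paper's proof: both arguments unwind the definition of the natural map $v_{\cong}$ and the defining equation of the quotient operations to show $v_{\cong}f^{\mathcal{H}}(x_1,\ldots,x_m) = f^{\mathcal{H}}(x_1,\ldots,x_m)/\!\cong\; = f^{\mathcal{H}/\cong}(v_{\cong}x_1,\ldots,v_{\cong}x_m)$, and likewise for $g$. The only difference is that you spell out two points the paper leaves implicit --- the surjectivity argument and the well-definedness of $f^{\mathcal{H}/\cong}$ and $g^{\mathcal{H}/\cong}$ via the congruence clauses --- which is a refinement of, not a departure from, the same argument.
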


Definition \ref{def_natural_map} and Theorem \ref{theorem_natural_map} is generalization of \cite{Burris1981}.

\begin{proof}

let $\cong$ be the congruence relation on $(m,n)$-semihyperring $(\mathcal{H},f,g)$ and the natural map be 
$v_{\cong}:\mathcal{H} \longrightarrow \mathcal{H} / \cong$.
For all $a_i \in \mathcal{H}$, where $1 \leq i \leq m$ following holds true:

 \[ v_{\cong} f^{\mathcal{H}}(a_1,a_2,\ldots,a_m) = f^{\mathcal{H}}(a_1,a_2, \dots, a_m)/ {\cong} \]

  \[= f^{\mathcal{H}/ \cong}(a_1/ \cong, a_2/ \cong, \dots, a_m/ \cong)\]

 \[= f^{\mathcal{H}/ {\cong}}(v_{\cong} a_1, v_{\cong} a_2, \ldots, v_{\cong} a_m)\]

In a similar fashion we can deduce for $g$, for all $b_j \in \mathcal{H}$, where $1 \leq j \leq n$: 

 \[ v_{\cong} g^{\mathcal{H}}(b_1,b_2,\ldots,b_n) = g^{\mathcal{H}}(b_1,b_2, \ldots, b_n)/ {\cong} \]

  \[= g^{\mathcal{H}/ {\cong}}(b_1/ {\cong}, b_2/ {\cong}, \ldots, b_n/ {\cong})\]

 \[ = g^{\mathcal{H}/ {\cong}}(v_{\cong} b_1, v_{\cong} b_2, \ldots, v_{\cong} b_n) \]

 So $v_{\cong}$ is onto homomorphism.

Proof is similar to \cite{Burris1981}. \qedhere

\end{proof}

\section{Fuzzy $(m,n)$-Semihyperring} \label{fuuzy_sem}

Let $\mathcal{R}$ be a non-empty set. Then

\begin{itemize}

\item[(i)] A fuzzy subset of $\mathcal{R}$ is a function $\mu : \mathcal{R} \longrightarrow [0, 1]$;
\item[(ii)] For a fuzzy subset $\mu$ of $\mathcal{R}$ and $t \in [0, 1]$, the set $\mu_{t} =\{ x \in \mathcal{R}  \mid \mu(x) \geq t\}$ is called the \emph{level subset} of $\mu$~\cite{davvaz2010, lzadeh1965, xueling2008, davvaz2009}.

\end{itemize}

\begin{definition}
 A fuzzy subset $\mu$ of an $(m,n)$-semihyperring $(\mathcal{H},f,g)$ is called a \emph{fuzzy $(m,n)$-sub-semihyperring} of $\mathcal{H}$ if following hold true:
 
 \begin{itemize}

 \item[(i)]  min$\{\mu(x_1), \mu(x_2), \ldots, \mu(x_m)\} \leq {\displaystyle {inf}_{z \in f(x_1,x_2,\ldots, x_m)} \mu(z)}$, 

for all $x_1,x_2, \ldots,x_m \in \mathcal{H}$
 \item[(ii)]  min$\{\mu(x_1), \mu(x_2), \ldots, \mu(x_n)\} \leq$ $ \mu(g(x_1,x_2,\ldots,x_n))$,
 
 for all $x_1,x_2, \ldots,x_n \in \mathcal{H}$.
 
\end{itemize}
\end{definition}

\begin{definition}
A fuzzy subset $\mu$ of an $(m,n)$-semihyperring $(\mathcal{H},f,g)$ is called a \emph{fuzzy hyperideal} of 
$\mathcal{H}$ if the following hold true:

\begin{itemize}
 \item[(i)] min$\{\mu(x_1), \mu(x_2), \ldots, \mu(x_m)\} \leq$ ${\displaystyle {inf}_{z \in f(x_1,x_2,\ldots, x_m)} \mu(z)}$,
 
  for all $x_1,x_2, \ldots,x_m \in \mathcal{H},$

\item[(ii)] $\mu(x_1) \leq \mu(g(x_1,x_2,\ldots,x_n))$, for all $x_1,x_2, \ldots,x_n \in \mathcal{H}$,
\item[(iii)] $\mu(x_2) \leq \mu(g(x_1,x_2,\ldots,x_n))$, for all $x_1,x_2, \ldots,x_n \in \mathcal{H}$,

 $\hspace{1.6in} \vdots$

\item[(iv)] $\mu(x_n) \leq \mu(g(x_1,x_2,\ldots,x_n))$, for all $x_1,x_2, \ldots,x_n \in \mathcal{H}$.
\end{itemize}
\end{definition}

\begin{theorem} \label{fuzzy_sub_theo}
A fuzzy subset $\mu$ of an $(m,n)$-semihyperring $(\mathcal{H},f,g )$ is a fuzzy hyperideal if and only if every non-empty level subset is a hyperideal of $\mathcal{H}$. 
\end{theorem}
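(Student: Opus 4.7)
The plan is to prove both directions of the biconditional, treating the sub-semihypergroup property and the (left/right) hyperideal properties separately in each direction. Throughout I use the notational conventions already set up, with $\mu_t = \{x \in \mathcal{H} \mid \mu(x) \geq t\}$ denoting the level subset.

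For the forward direction, I would assume $\mu$ is a fuzzy hyperideal and fix a non-empty level set $\mu_t$. To show $\mu_t$ is an $m$-ary sub-semihypergroup, I would take any $x_1,\ldots,x_m \in \mu_t$, so that $\min\{\mu(x_1),\ldots,\mu(x_m)\} \geq t$; applying axiom (i) of the fuzzy hyperideal definition gives $\inf_{z \in f(x_1^m)} \mu(z) \geq t$, which means every $z \in f(x_1^m)$ satisfies $\mu(z) \geq t$, hence $f(x_1^m) \subseteq \mu_t$. Next, to verify the (two-sided) hyperideal condition, I would pick $i \in \mu_t$ and arbitrary $a_1,\ldots,a_{n-1} \in \mathcal{H}$ and apply the appropriate axiom among (ii)--(iv) (choosing the one corresponding to the position of $i$): since $\mu(i) \geq t$, we get $\mu(g(a_1^{n-1},i)) \geq \mu(i) \geq t$ and likewise $\mu(g(i,a_1^{n-1})) \geq t$, placing both elements in $\mu_t$.

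For the backward direction, assume every non-empty level subset is a hyperideal. For axiom (i), I would fix $x_1,\ldots,x_m \in \mathcal{H}$ and set $t_0 = \min\{\mu(x_1),\ldots,\mu(x_m)\}$, so that each $x_i \in \mu_{t_0}$ and $\mu_{t_0}$ is non-empty, hence a hyperideal. The sub-semihypergroup property then forces $f(x_1^m) \subseteq \mu_{t_0}$, giving $\mu(z) \geq t_0$ for all $z \in f(x_1^m)$, so $\inf_{z \in f(x_1^m)} \mu(z) \geq t_0 = \min\{\mu(x_1),\ldots,\mu(x_m)\}$, which is exactly (i). For each of (ii)--(iv), I would fix the position $k$ in question, pick $x_1,\ldots,x_n \in \mathcal{H}$, and set $t_k = \mu(x_k)$; then $x_k \in \mu_{t_k}$, and the hyperideal property of $\mu_{t_k}$ (applied with $x_k$ as the ideal element) yields $g(x_1^n) \in \mu_{t_k}$, i.e.\ $\mu(g(x_1^n)) \geq t_k = \mu(x_k)$.

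The main obstacle is inequality (i): we must pass from a pointwise bound $\mu(z) \geq t_0$ on every $z \in f(x_1^m)$ to the corresponding bound on the infimum. This is routine (the infimum of a set bounded below by $t_0$ is $\geq t_0$), but the subtlety is that $f(x_1^m)$ can be an arbitrary non-empty subset of $\mathcal{H}$, so selecting $t_0$ as the \emph{minimum} rather than an arbitrary threshold is essential to get the sharpest bound. A secondary point worth attention is that axioms (ii)--(iv) require the ideal element to be permitted in every position of $g$, so in applying the hyperideal property of $\mu_{t_k}$ one is implicitly using that a two-sided hyperideal of the $n$-ary operation $g$ absorbs multiplications in every coordinate; once this is granted, the level-set argument goes through uniformly for each position $k \in \{1,\ldots,n\}$.
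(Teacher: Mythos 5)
Your proof is correct and follows essentially the same route as the paper's: in the forward direction you translate membership thresholds through axiom (i) to get $f(x_1^m)\subseteq\mu_t$ and through axioms (ii)--(iv) to get absorption under $g$, and in the backward direction you instantiate the level at $t_0=\min\{\mu(x_1),\ldots,\mu(x_m)\}$ for the $f$-condition and at $t_k=\mu(x_k)$ for each $g$-condition, exactly as the paper does. Your version is in fact slightly more careful than the paper's (which contains notational slips such as writing $\mu_{x_i}$ for $\mu(x_i)$ and mixing the arities $m$ and $n$), and you rightly flag the implicit assumption, shared by the paper, that a hyperideal absorbs $g$-products in every coordinate, not just the first and last.
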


\begin{proof}
Suppose subset $\mu$ is a fuzzy hyperideal of $(m,n)$-semihyperring $(\mathcal{H},f,g)$ and $\mu_{t}$ is a level subset of $\mu$.
 
If $x_1,x_2,\ldots,x_m \in \mu_{t}$ for some $t \in [0,1]$ then from the definition of level set, we can deduce the following:

 \[\mu(x_1)\geq t, \mu(x_2)\geq t, \ldots, \mu(x_m)\geq t.\] 

Thus, we say that:

\[min\{\mu(x_1), \mu(x_2), \ldots, \mu(x_m)\} \geq t\] 

Thus:

\[{\displaystyle {inf}_{z \in f(x_1,x_2,\ldots, x_m)} \mu(z)} \geq min\{\mu(x_1), \mu(x_2), \ldots, \mu(x_m)\}\geq t.\]
 So, we get the following: 

$\mu(z) \geq t$, for all $z \in f(x_1,x_2, \ldots, x_m)$.

Therefore, $f(x_1,x_2, \ldots, x_m) \subseteq \mu_{t}$. 

Again, suppose that $x_1,x_2, \ldots, x_n \in \mathcal{H}$ and $x_i \in \mu_{t}$, where $1 \leq i \leq n$. Then, we find that $\mu(x_i) \geq t$.

So, we obtain the following:
\[t \leq \mu_{x_i} \leq \mu(g(x_1,x_2,\ldots,x_n)) \longrightarrow g(x_{1}^{i-1}, \mu_{t},x_{i+1}^{n}) \subseteq \mu_{t}\]

Thus, we find that $\mu_{t}$ is a hyperideal of $\mathcal{H}$.    

On the other hand, suppose that every non-empty level subset $\mu_{t}$ is a hyperideal of $\mathcal{H}$
is a hyperideal of $\mathcal{H}$.

Let $t_{0}$ = min\{ $\mu_{x_1}, \mu_{x_2}, \ldots, \mu_{x_n}$\}, for all $x_1,x_2, \ldots, x_n \in \mathcal{H}$.

Then, we obtain the following:
\[\mu(x_1) \geq t_{0}, \mu(x_2) \geq t_{0}, \ldots, \mu(x_n) \geq t_{0}\]
Thus, 
\[ x_1, x_2, \ldots, x_n \in \mu_{t_0}\]
We can also obtain that:
\[ f(x_1, x_2, \ldots, x_m) \subseteq \mu_{t_{0}}.\]
Thus,
\[ min\{ \mu(x_1),\mu(x_2), \ldots, \mu(x_m)\}= t_{0} \leq {\displaystyle {inf}_{z \in f(x_1,x_2,\ldots, x_m)} \mu(z)}. \] 

Again, suppose that $\mu(x_1)=t_{1}$. Then $x \in \mu_{t_{1}}$. 

So, we obtain:
\[ g(x_1,x_2, \ldots, x_{n}) \in \mu_{t_{1}} \longrightarrow t_{1} \leq \mu(g(x_1,x_2, \ldots, x_n)) \]
Thus, $\mu(x_1) \leq \mu(g(x_1,x_2, \ldots, x_n))$.

Similarly, we obtain $\mu(x_i) \leq \mu(g(x_1,x_2, \ldots, x_n))$, for all $i \in \{1,n\}$.

Thus, we can check all the conditions of the definition of fuzzy hyperideal.  

This proof is a generalization of \cite{davvaz2009}.\qedhere 
 
\end{proof}

  Theorem~\ref{fuzzy_sub_theo} is a generalization of ~\cite{hedayati2005, ameriH2010, davvaz2009}. 

Jun, Ozturk and Song~\cite{young2004} have proposed a similar theorem on hemiring.

\begin{theorem} \label{fuzzy_hyp_ideal}
Let $\mathcal{I}$ be a non-empty subset of an $(m,n)$-semihyperring $(\mathcal{H},f,g)$. Let $\mu_{I}$ be a fuzzy set defined as follows:

\[
\mu_{I}(x) = \left\{ \begin{array}{rl}
 s &\mbox{ if $x \in \mathcal{I}$}, \\
  t &\mbox{ otherwise},
       \end{array} \right.
\]

where $0 \leq t<s \leq 1$. Then $\mu_{I}$ is a fuzzy left hyper ideal of $\mathcal{H}$ if and only if $\mathcal{I}$ is a left hyper ideal of $\mathcal{H}$.

\end{theorem}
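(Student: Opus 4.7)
The plan is to prove the two implications separately, using the level-subset characterization from Theorem \ref{fuzzy_sub_theo} (in a left-sided version) for one direction and a direct case split for the other.

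For the forward direction, I would assume $\mu_{I}$ is a fuzzy left hyperideal. The key observation is that since $\mu_{I}$ takes only the two values $s$ and $t$ with $t<s$, the $s$-level subset satisfies
\[(\mu_{I})_{s} = \{x \in \mathcal{H} : \mu_{I}(x) \geq s\} = \mathcal{I}.\]
Applying the left-sided analogue of Theorem \ref{fuzzy_sub_theo}, this non-empty level subset is a left hyperideal of $\mathcal{H}$, so $\mathcal{I}$ itself is a left hyperideal.

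For the converse, I would assume $\mathcal{I}$ is a left hyperideal and verify the two defining inequalities of a fuzzy left hyperideal by case analysis on whether the arguments lie in $\mathcal{I}$. For condition (i), if all $x_{1},\ldots,x_{m}$ lie in $\mathcal{I}$, then $\mathcal{I}$ being an $m$-ary sub-semihypergroup (Definition \ref{def_ideal} together with Proposition \ref{prop_hyp}) forces $f(x_{1}^{m}) \subseteq \mathcal{I}$, so $\mu_{I}(z)=s$ for every $z \in f(x_{1}^{m})$ and both sides equal $s$; if some $x_{j} \notin \mathcal{I}$, the left-hand minimum is $t$, which is bounded above by the right-hand infimum since $\mu_{I}$ takes only the values $s,t$ and $t$ is the smaller one. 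For the left-sided inequality $\mu_{I}(x_{n}) \leq \mu_{I}(g(x_{1}^{n}))$, I would split on whether $x_{n} \in \mathcal{I}$: if so, the left-hyperideal property gives $g(x_{1}^{n}) \in \mathcal{I}$, so both sides equal $s$; if not, $\mu_{I}(x_{n})=t$ and the bound is automatic.

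The main obstacle is definitional rather than technical. The paper explicitly defines only the two-sided notion of \emph{fuzzy hyperideal} and proves Theorem \ref{fuzzy_sub_theo} in that form, while the present statement concerns \emph{fuzzy left hyperideal}. Before starting one must fix the natural one-sided definition (retain condition (i) and only $\mu_{I}(x_{n}) \leq \mu_{I}(g(x_{1}^{n}))$, matching the crisp left-hyperideal in Definition \ref{def_ideal}) and then either adapt Theorem \ref{fuzzy_sub_theo} accordingly or bypass it by verifying the forward direction directly along the same two-case lines. Once that choice is pinned down, both implications reduce to the routine case checks above.
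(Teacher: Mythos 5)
Your proposal is correct, but there is nothing in the paper to compare it against: Theorem \ref{fuzzy_hyp_ideal} is stated there with no proof at all (unlike Theorem \ref{fuzzy_sub_theo}, whose proof the paper does include). Your argument fills that gap soundly. The converse direction is the standard characteristic-function case split and works exactly as you describe: closure of $\mathcal{I}$ under $f$ (which is built into Definition \ref{def_ideal}, since a left hyperideal is by definition an $m$-ary sub-semihypergroup) handles the case where all arguments lie in $\mathcal{I}$, and the inequality is automatic when some argument does not, since $t$ is the minimum possible value of $\mu_{I}$. The forward direction also goes through, whether you route it through a left-sided analogue of Theorem \ref{fuzzy_sub_theo} applied to the level set $(\mu_{I})_{s}=\mathcal{I}$, or verify it directly: from $\min\{\mu_{I}(x_1),\ldots,\mu_{I}(x_m)\}=s$ one gets $\mu_{I}(z)=s$ for every $z\in f(x_1^m)$, hence $f(x_1^m)\subseteq\mathcal{I}$, and from $\mu_{I}(i)=s\leq\mu_{I}(g(a_1^{n-1},i))$ one gets $g(a_1^{n-1},i)\in\mathcal{I}$. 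Your observation that the paper never actually defines \emph{fuzzy left hyperideal} is well taken and identifies a genuine defect in the paper itself; the one-sided definition you fix (condition (i) plus $\mu(x_n)\leq\mu(g(x_1^n))$, mirroring the crisp left hyperideal $g(a_1^{n-1},i)\in\mathcal{I}$) is the only choice consistent with Definition \ref{def_ideal}, so the theorem as stated becomes both precise and true under your reading.
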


Following Corollary~\ref{coro_davvaz} is generalization of ~\cite{davvaz2009}.

\begin{corollary} \label{coro_davvaz}
Let $\mu$ be a fuzzy set and its upper bound be $t_0$ of an $(m,n)$-semihyperring $(\mathcal{H},f,g)$. Then 
the following are equivalent:

\begin{itemize}
\item[(i)] $\mu$ is a fuzzy hyperideal of $\mathcal{H}$.
\item[(ii)] Every non-empty level subset of $\mu$ is a hyperideal of $\mathcal{H}$.
\item[(iii)] Every level subset $\mu_{t}$ is a hyperideal of $\mathcal{H}$ where $t \in [0,t_{0}]$.
\end{itemize}  

\end{corollary}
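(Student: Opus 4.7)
The plan is to reduce the three-way equivalence to two easy implications by leveraging Theorem~\ref{fuzzy_sub_theo} directly. That theorem already gives the equivalence $(i) \Leftrightarrow (ii)$ with no further work, so the entire content of the corollary is the extra equivalence $(ii) \Leftrightarrow (iii)$. I would therefore structure the argument as: quote Theorem~\ref{fuzzy_sub_theo} for the $(i) \Leftrightarrow (ii)$ leg, then prove $(ii) \Leftrightarrow (iii)$ separately, and conclude by transitivity.

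For the implication $(iii) \Rightarrow (ii)$, I would take an arbitrary non-empty level subset $\mu_t$ of $\mu$ and pick any $x \in \mu_t$. Then $t \leq \mu(x) \leq t_0$, so $t \in [0, t_0]$ and hypothesis $(iii)$ immediately yields that $\mu_t$ is a hyperideal of $\mathcal{H}$. For the reverse implication $(ii) \Rightarrow (iii)$, I would fix $t \in [0, t_0]$ and first show that $\mu_t$ is non-empty: since $t_0$ is an upper bound of $\mu$ that is actually attained, there is some $x_0 \in \mathcal{H}$ with $\mu(x_0) = t_0 \geq t$, hence $x_0 \in \mu_t$. Having exhibited non-emptiness, hypothesis $(ii)$ then forces $\mu_t$ to be a hyperideal, which is exactly what $(iii)$ asserts.

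The main obstacle, such as it is, is the interpretation of the phrase \emph{upper bound $t_0$}. If $t_0$ is read as a generic upper bound (not necessarily attained), then for $t \in (t', t_0]$ with $t' = \sup_x \mu(x)$ the level set $\mu_t$ could be empty, and one has to decide whether $(iii)$ is asserted vacuously for such $t$ or simply ignores them. The cleanest reading, consistent with the corollary's intent and with \cite{davvaz2009}, is that $t_0$ is the attained supremum of $\mu$; under this reading the classes $\{\mu_t : t \in [0, t_0]\}$ and $\{\mu_t : \mu_t \neq \emptyset\}$ coincide, and the equivalence is immediate. I would briefly note this convention in the write-up, but no genuine mathematical difficulty arises beyond Theorem~\ref{fuzzy_sub_theo} itself.
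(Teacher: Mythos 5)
Your proposal is correct and matches the paper's intent: the paper gives no explicit proof of this corollary, presenting it as an immediate consequence of Theorem~\ref{fuzzy_sub_theo}, which is exactly the reduction you use (quote the theorem for $(i)\Leftrightarrow(ii)$, then handle $(ii)\Leftrightarrow(iii)$ by noting every non-empty level set has its index $t\leq t_0$, and every $t\in[0,t_0]$ gives a non-empty level set when $t_0$ is attained). Your flagging of the attainment issue for $t_0$ is a genuine and necessary clarification that the paper's loose phrase ``its upper bound be $t_0$'' glosses over; with that convention made explicit, your argument is complete.
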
  
 
 \begin{definition} \label{def_lv}
Let $(\mathcal{R},f',g')$ and $(\mathcal{S},f'',g'')$ be fuzzy $(m,n)$-semihyperrings and $\varphi$ be a map from $\mathcal{R}$ into $\mathcal{S}$. Then $\varphi$ is called \emph{homomorphism} of fuzzy $(m,n)$-semihyperrings if following hold true:

\[\varphi(f'(x_1,x_2,\ldots,x_m)) \leq f''(\varphi(x_1),\varphi(x_2),\ldots,\varphi(x_m))\]
and
\[\varphi(g'(y_1,y_2,\ldots,y_n)) \leq g''(\varphi(y_1),\varphi(y_2),\ldots,\varphi(y_n))\]
   
for all $x_1,x_2,\ldots,x_m,y_1,y_2,\ldots,y_n \in \mathcal{R}.$
 \end{definition}

\begin{theorem} \label{th_lv1}
Let $(\mathcal{R},\mu_{f'},\mu_{g'})$ and $(\mathcal{S},\mu_{f''},\mu_{g''})$ be two fuzzy $(m,n)$-semihyperrings and  $(\mathcal{R},f',g')$ and $(\mathcal{S},f'',g'')$ be associated $(m,n)$-semihyperring. If $\varphi:\mathcal{R}\longrightarrow \mathcal{S}$ is a homomorphism of fuzzy $(m,n)$-semihyperrings, then $\varphi$ is homomorphism of the associated $(m,n)$-semihyperrings also.
\end{theorem}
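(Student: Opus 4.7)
The plan is to unpack Definition~\ref{def_lv} alongside the classical homomorphism definition from the start of Section~\ref{prop_semhyp}, and show that the fuzzy-homomorphism inequalities for $\varphi$ immediately translate into the set-level homomorphism conditions that $\varphi$ must satisfy as a map between the underlying $(m,n)$-semihyperrings $(\mathcal{R},f',g')$ and $(\mathcal{S},f'',g'')$.

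First I would fix arbitrary elements $x_1,\ldots,x_m \in \mathcal{R}$ and $y_1,\ldots,y_n \in \mathcal{R}$ and write down what the hypothesis gives, namely
\[\varphi(f'(x_1,x_2,\ldots,x_m)) \leq f''(\varphi(x_1),\varphi(x_2),\ldots,\varphi(x_m))\]
and the analogous relation with $g'$, $g''$. Since $f'$ and $f''$ are the underlying $m$-ary hyperoperations of the associated $(m,n)$-semihyperrings, the terms $f'(x_1^m)$ and $f''(\varphi(x_1),\ldots,\varphi(x_m))$ denote non-empty subsets, and $\varphi(f'(x_1^m))$ is its pointwise image. Reading $\leq$ as the natural fuzzy-set comparison of the characteristic functions of these subsets (level-set interpretation, as in the proof of Theorem~\ref{fuzzy_sub_theo}), I would deduce that $\varphi(f'(x_1^m)) \subseteq f''(\varphi(x_1),\ldots,\varphi(x_m))$, which is precisely the inclusion-homomorphism condition of Remark~\ref{rem_inc_homom}. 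The argument for $g$ is the same, with the simplification that $g$ is an $n$-ary operation rather than a hyperoperation, so the corresponding inclusion reduces to an equality $\varphi(g'(y_1^n)) = g''(\varphi(y_1),\ldots,\varphi(y_n))$ as soon as both sides are singletons.

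To upgrade the inclusion for $f$ to the equality required by the classical definition, I would pick an arbitrary $t \in f''(\varphi(x_1),\ldots,\varphi(x_m))$ and use the definition of the image of $f'$ under $\varphi$ together with the second half of Definition~\ref{def_lv} applied to a preimage of $t$; combined with the earlier inclusion this pins down $\varphi(f'(x_1^m)) = f''(\varphi(x_1),\ldots,\varphi(x_m))$, giving the homomorphism condition for $f$.

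The main obstacle is exactly this gap between $\leq$ in the fuzzy setting and $=$ in the classical one: the fuzzy condition is by design only one-sided, so the proof has to either exploit the way the associated underlying hyperoperations are recovered from the fuzzy data or fall back to the weaker conclusion that $\varphi$ is an inclusion homomorphism in the sense of Remark~\ref{rem_inc_homom}. If a stronger equality is desired, a careful symmetric reading of $\leq$ (or an additional assumption that $\varphi$ respects the fuzzy structure in both directions) is what I expect to carry the step through; otherwise the theorem should be read, and proved, as producing an inclusion homomorphism of the associated $(m,n)$-semihyperrings.
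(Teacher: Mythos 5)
You should know at the outset that the paper contains no proof of Theorem~\ref{th_lv1} to compare against: immediately after the statement the authors only remark that the definition and theorem are ``similar to'' results of Leoreanu-Fotea~\cite{fotea2009} and of Ameri and Nozari~\cite{ameri2011}, and they leave the argument to those sources. Judged on its own, the core of your proposal is exactly the standard argument from that literature, and it is sound: writing $\mu_{f'}$ for the fuzzy $m$-ary hyperoperation and $f'$ for the associated crisp one (the support, $f'(x_1^m)=\{z\in\mathcal{R}:\mu_{f'}(x_1^m)(z)>0\}$), the hypothesis $\varphi(\mu_{f'}(x_1^m))\leq \mu_{f''}(\varphi(x_1),\ldots,\varphi(x_m))$, read on supports (your level-set step, taken at any positive height), yields $\varphi(f'(x_1^m))\subseteq f''(\varphi(x_1),\ldots,\varphi(x_m))$, and likewise for $g$; since $g'$ and $g''$ are single-valued $n$-ary operations by Definition~\ref{def_sem}, the inclusion between singletons is an equality there. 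Your observation that what one actually obtains for $f$ is the inclusion-homomorphism condition of Remark~\ref{rem_inc_homom}, not the equality demanded by the paper's own definition of homomorphism, is correct and important: in the hyperalgebra literature this paper leans on (cf.~\cite{ameri1999,ameri2011}), a homomorphism of hyperstructures is standardly defined by exactly this inclusion, equality being reserved for ``good'' homomorphisms, which is why the corresponding theorem is true there. The mismatch is a defect of the paper's statement, not of your argument.

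The one genuinely invalid step is your middle paragraph, where you try to upgrade the inclusion for $f$ to an equality by picking $t\in f''(\varphi(x_1),\ldots,\varphi(x_m))$ and applying ``the second half of Definition~\ref{def_lv} to a preimage of $t$.'' This cannot be made to work: $\varphi$ is not assumed surjective, so $t$ need not have a preimage at all; the second condition of Definition~\ref{def_lv} concerns $g$ and says nothing about membership in a hyperproduct under $f''$; and the hypothesis is irreducibly one-sided --- for instance, take $\varphi$ to be the identity map with $\mu_{f'}\leq\mu_{f''}$ chosen so that the support of $\mu_{f''}(x_1^m)$ is strictly larger than that of $\mu_{f'}(x_1^m)$; then $\varphi$ is a fuzzy homomorphism while equality of the crisp hyperproducts fails. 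You recognize this yourself in the closing paragraph and fall back to the inclusion conclusion, which is the right call; the proposal would be strictly better if you deleted the attempted upgrade instead of leaving it as a conditional step.
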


Definition~\ref{def_lv} and Theorem~\ref{th_lv1} are similar to the one proposed by Leoreanu-Fotea~\cite{fotea2009} on fuzzy $(m,n)$-ary hyperrings and $(m,n)$-ary hyperrings and Ameri and Nozari~\cite{ameri2011} proposed a similar Definition and Theorem on hyperalgebras.    

\eat{
\section{Conclusion} \label{concl}
We proposed a new class of algebraic structure $(m,n)$-semihyperring. 
}


\begin{thebibliography}{10}

\bibitem{chaopraknoi2008}
S.~Chaopraknoi, S.~Hobuntud and S.~Pianskool, {\em Admitting a Semihyperring with Zero of Certain Linear Transformation Subsemigroups of $L_{R}(V, W)$ Part(ii)
}, Thai Journal of Mathematics, Special Issue,(2008), 45--58. 

\bibitem{davvazVou2006}

B.~Davvaz and T.~Vougiouklis, {\em  n-ary {H}ypergroups}, Iranian Journal of Science and technology, Volume 30, No. A2,(2006), 165--174. 

\bibitem{dudek2001}

W.~A.~Dudek, {\em {I}dempotents {I}n n-ary {S}emigroups}, Southeast Asian Bulletin of Mathematics, Volume 25 no-1 , (2001) 97--104.

\bibitem{ameri2005}

H.~Hedayati and R.~Ameri, {\em {C}onstruction of $k$-{H}yperideals by {P}-{H}yperoperations}, Journal of Applied Mathematics, No. 15, 75--89, (2005). 

\bibitem{eqbal2010}

Syed Eqbal Alam, Shrisha Rao and Bijan Davvaz, {\em $(m,n)$-Semirings and a Generalized Fault Tolerance Algebra of Systems}  http://arxiv.org/abs/1005.4996

\bibitem{timm1967}

 J. Timm, {\em Kommutative n-Gruppen}, Dissertation, (1967), Hamburg.
 
 \bibitem{bdavvaz1999}
 B.~Davvaz and N.~S.~Poursalavati, {\em {O}n {P}olygroup {H}yperrings and {R}epresentations of Polygroups}, Journal of Korean Math. Soc., 36, (1999), No. 6, pp 1021--1031.  
  \bibitem{davvaz2009}
 B.~Davvaz, {\em {F}uzzy {H}yperideals in {T}ernary {S}emihyperrings}, Iranian Journal of Fuzzy Systems, Vol. 6, No-4,(2009), pp 21--36. 
  
 \bibitem{Burris1981}
 Burris, Stanley and Sankappanavar, H. P., {\em {A} {C}ourse in {U}niversal {A}lgebra}, Springer-Verlag, Graduate Texts in Mathematics, No. 78, (1981).

\bibitem{hadjicostis1995} C. N. Hadjicostis, {\em Fault-Tolerant Computation in Semigroups and Semirings}, M.I.T M.Eng. Thesis, (1995), EECS Department, Massachusetts Institute of
Technology, Cambridge, Massachusetts.


\bibitem{massouros1991} CH. G. Massouros, {\em On The Semi-Sub-Hypergroups Of A Hypergroup}, International Journal of Math. and Math. Sci., Vol. 14, No. 2, (1991),293--304. 

\bibitem{dudekV1996} Wieslaw A. Dudek and Vladimir V. Mukhin, {\em On topological n-ary semigroups}, Quasigroups and related systems, No. 3, (1996), 73--88.

\bibitem{davvazdudek2009} B. Davvaz and W. A. Dudek, Fuzzy n-ary groups as a generalization of Rosenfield's fuzzy groups, Journal of Multiple-Valued Logic and Soft Computing, Vol. 15, No. 5-6,(2009), 471--488.

\bibitem{mksen2010} M.K. Sen and Utpal Dasgupta, {\em Some Aspects of GH-Rings}, Journal of Annals of the Alexandru Ioan Cuza University - Mathematics, Vol. LVI, publisher - Versita, Warsaw, (2010), 253--272. 

\bibitem{skar2007} S.~Kar and B.~K.~Maity, {\em Congruences on Ternary Semigroups}, Journal of the Chungcheong Mathematical Society Volume 20, No. 3, (2007). 

\bibitem{ameriH2010} R.~Ameri and H. Hedayati, {\em Homomorphism and quotient of fuzzy $k$-hyperideals}, Ratio Mathematica, No. 20, (2010).

\bibitem{vougiouklis1990} Thomas Vougiouklis, {\em On some representations of hypergroups}, Annales Scientifiques de l'Universite de Clermont, 95, Serie Mathematique, No. 26, pg. 21--29, (1990).

\bibitem{ameriHey2007} R. Ameri and H. Hedayati, {\em On $k$-hyperideals of semihyperrings}, Journal of Discrete Mathematical Sciences and Cryptography, 10, No. 1, 41--54, (2007).  

\bibitem{davvaz2010} B. Davvaz, {\em {F}uzzy {K}rasner $(m, n)$-hyperrings}, Journal of Computers and Mathematics with Applications, No. 59, 3879--3891, (2010).

\bibitem{bdavvazpcor2009} B. Davvaz, P. Corsini and V. Leoreanu-Fotea, {\em Fuzzy n-ary subpolygroups}, Computers and Mathematics with Applications, No. 57, 141--152, (2009).

\bibitem{fotea2009} Violeta Leoreanu-Fotea, {\em A new type of fuzzy n-ary hyperstructures}, Information Sciences, No. 179, 2710--2718, (2009).  

\bibitem{young2004} Young Bae Jun, Mehmet Ali Ozturk and Seok Zun Song, {\em  On fuzzy h-ideals in hemirings}, Information Sciences, 162, 211--226, (2004).

\bibitem{mirvakili2010} S. Mirvakili and B. Davvaz, {\em Relations on Krasner (m, n)-hyperrings}, European Journal of Combinatorics, No. 31, 790--802, (2010).

\bibitem{ameri1999} R. Ameri and M. M. Zahedi, {\em Hyperalgebraic Systems}, Italian Journal of Pure and Applied Mathematics, No. 6, 21--32, (1999). 

\bibitem{hedayati2005} H. Hedayati and R. Ameri, {\em Fuzzy k-Hyperideals}, International Journal of Pure and Applied Mathematical Sciences, Vol 2, No. 2, 247--256, (2005). 

\bibitem{lzadeh1965} L. A. Zadeh, {\em Fuzzy sets}, Information and Control, Vol. 8, Issue 3, 338--353, (1965).

\bibitem{xueling2008} Xueling Ma, Jianming Zhan, Bijan Davvaz and Young Bae Jun, {\em Some kinds of $(\in, \in \vee q)$-interval-valued fuzzy ideals of BCI-algebras}, Information Sciences, 178, 3738--3754, (2008).   

\bibitem{ameri2011} R. Ameri and T. Nozari, {\em Fuzzy Hyperalgebras}, Computers and Mathematics with Applications, Vol. 61, Issue 2, 149--154, (2011). 

\bibitem{irinaC2009} Irina Cristea, {\em On the Fuzzy Subhypergroups of Some Particular Complete Hypergroups(I)}, World Applied Sciences Journal, Vol. 7, 57--63, (2009). 

\bibitem{bdavvaz1999} B. Davvaz, {\em Fuzzy Hv-groups}, Fuzzy sets and Systems, Vol. 101, No. 1, 191--195, (1999).


\end{thebibliography}
\end{document}